\newcommand{\inprod}[1]{\left\langle #1 \right\rangle}
\newcommand{\leaderfill}{\leaders\hbox to 1em{\hss.\hss}\hfill}
\newcommand{\CC}{\mathbb{C}}
\newcommand{\aff}{\mathrm{aff}\,}
\newcommand{\Span}{\mathrm{span}\,}
\newtheorem{lemma}{Lemma}[section]
\newtheorem{theorem}[lemma]{Theorem}
\newtheorem{corollary}[lemma]{Corollary}
\numberwithin{equation}{section}
\let\phi=\varphi
\let\theta=\vartheta
\def\@maketitle{%
  \newpage
  \null
  \vskip 2em%
  \begin{center}%
  \let \footnote \thanks
    {\Large\bfseries \@title \par}%
    \vskip 1.5em%
    {\normalsize
      \lineskip .5em%
      \begin{tabular}[t]{c}%
        \@author
      \end{tabular}\par}%
    \vskip 1em%
    {\normalsize \@date}%
  \end{center}%
  \par
  \vskip 1.5em}
\begin{document}

\title{\bf \huge Midpoints  for Thompson's metric on symmetric cones}

\author{Bas Lemmens and Mark Roelands%
\thanks{Mark Roelands was supported by EPSRC grant EP/J500446/1\\
2010 Mathematics Subject Classification: Primary  53C22, Secondary 15B48.}}

\maketitle
\date{}
%\date{\today}

\begin{abstract} 
We  characterise the affine span of the midpoints sets, $\mathcal{M}(x,y)$, for Thompson's metric on  symmetric cones in terms of a translation of the zero-component of the Peirce decomposition of an idempotent. As a consequence we derive an explicit formula for the dimension of the affine span of $\mathcal{M}(x,y)$ in case the associated Euclidean Jordan algebra is simple. In particular, we find for $A$ and $B$ in the cone positive definite Hermitian matrices that 
\[
\dim(\aff \mathcal{M}(A,B))=k^2,
\]
where $k$ is the number of eigenvalues $\mu$ of $A^{-1}B$, counting multiplicities, such that \[
\mu\neq \max\{\lambda_+(A^{-1}B),\lambda_-(A^{-1}B)^{-1}\},
\] where 
$\lambda_+(A^{-1}B):=\max \{\lambda\colon \lambda\in\sigma(A^{-1}B)\}$ and 
$\lambda_-(A^{-1}B):=\min\{\lambda\colon \lambda\in\sigma(A^{-1}B)\}$. 
These results extend work by Y. Lim \cite{Lim3}. 
\end{abstract}

\section{Introduction}
The space of $n\times n$ Hermitian matrices contains a cone $\Pi_n(\CC)$ of all positive-semidefinite matrices. Its  interior,  $\Pi_n(\CC)^\circ$, consists  of all invertible elements, and is a prime example of a symmetric cone. It is well know, see for example \cite{Bha}, that $\Pi_n(\CC)^\circ$ can be equipped with a Riemannian metric
\[
\delta_2(A,B):=\|\log(A^{-1}B)\|_2=\left(\sum_{i=1}^n (\log\lambda_i(A^{-1}B))^2\right)^{1/2},
\] 
where the $\lambda_i(A^{-1}B)$'s are the eigenvalues of $A^{-1}B$. The metric space $(\Pi_n(\CC)^\circ,\delta_2)$ is geodesic, i.e., any two points are connected by a geodesic. In fact, in this case the geodesic is unique and given by the path, 
\[
t\mapsto A^{1/2}(A^{-1/2} BA^{-1/2})^tA^{1/2},
\]
where $t\in [0,1]$, and the {\em geometric mean} 
\[
A\sharp B:= A^{1/2}(A^{-1/2} BA^{-1/2})^{1/2}A^{1/2}
\]
is the unique midpoint of $A$ and $B$. 

Another natural metric on $\Pi_n(\CC)^\circ$ is {\em Thompson's metric}, 
\[
d_T(A,B):= \|\log(A^{-1}B)\|_\infty = \max_i\left|\log\lambda_i(A^{-1}B)\right|.
\]
The space $(\Pi_n(\CC)^\circ,d_T)$ is a geodesic Finsler  metric space, see \cite{Lim1,Nu}, in which the path $t\mapsto A^{1/2}(A^{-1/2} BA^{-1/2})^tA^{1/2}$ is also a geodesic, but in general not unique. 

Thompson's metric, which was introduced in \cite{Tho}, is a useful metric that can be defined on the interior of any closed cone in a normed space. It is widely applied in the study of operator means \cite{LL2,LL3,LeeL,LP,PP}, geometric analysis of spaces of operators \cite{ACS,CM,CPR,LL1,Mo}, and the spectral theory of linear and nonlinear operators on cones \cite{AGLN,GQ,HIR,LNBook,LLNW,Nmem1,Tho}. In \cite{Lim3} Lim studied the geometry of the {\em midpoints set} 
\[
\mathcal{M}(A,B):=\left\{C\in\Pi_n(\CC)^\circ\colon d_T(A,C)=\frac{1}{2}d_T(A,B)=d_T(C,B)\right\}
\] 
for $A,B\in\Pi_n(\CC)^\circ$. Among other results Lim \cite[Theorem 5.2]{Lim3} showed that $\mathcal{M}(A,B)$ is a singleton if, and only if, $\sigma(A^{-1}B)\subseteq \{\alpha,\alpha^{-1}\}$ for some $\alpha>0$. This result has been generalised by the authors in \cite{LR} to the cone of positive self-adjoint elements in a unital $C^*$ algebra and symmetric cones.  

In general cones the  midpoints set $\mathcal{M}(x,y):=\{z\in K^\circ\colon d_T(x,z)=\frac{1}{2}d_T(x,y)=d_T(z,y)\}$ is convex, as it is the intersection of the Thompson metric balls $B(x,1/2)$ and $B(y,1/2)$, which are both convex, see \cite[Lemma 2.6.2]{LNBook}. 
The main goal of this  paper is to characterise the affine span of the midpoints set $\mathcal{M}(x,y)$ for $x$ and $y$ in a symmetric cone in terms of a translation of the zero-component of the Peirce decomposition of an idempotent. As a corollary we obtain an explicit formula for the dimension of the affine span of $\mathcal{M}(x,y)$ in case the associated Euclidean Jordan algebra is simple. In the special case where $A,B\in\Pi_n(\CC)^\circ$  we find that 
\[
\dim(\aff \mathcal{M}(A,B))=k^2,
\]
where $k$ is the number of eigenvalues $\mu$ of $A^{-1}B$, counting multiplicities, such that 
\[\mu\neq \max\{\lambda_+(A^{-1}B),\lambda_-(A^{-1}B)^{-1}\}\] 
and 
$\lambda_+(A^{-1}B):=\max \{\lambda\colon \lambda\in\sigma(A^{-1}B)\}$ and 
$\lambda_-(A^{-1}B):=\min\{\lambda\colon \lambda\in\sigma(A^{-1}B)\}$. 

To obtain the results we first prove a characterisation of the midpoints set in  a general cone in terms of its faces, see Theorem \ref{thm:3.2}. This result is subsequently used  in Section 4 to find the affine span of the midpoints set, and its  dimension, in symmetric cones. We begin by recalling some basic definitions.  

\section{Preliminaries}
A cone $K$ in a vector space $V$ is a convex subset such that $K\cap(-K)=\{0\}$ and $\lambda K\subseteq K$ for all $\lambda\ge 0$. It induces a partial ordering $\le_K$ on $V$ by putting $x\le_K y$ if $y-x\in K$. Given $x\leq_K y$ in $V$ we denote the {\em order interval} by $[x,y]_K:=\{z\in V\colon x\leq_K z\leq_K y\}$. A non-empty convex subset $F\subseteq K$ is said to be a \textit{face} of $K$ if $x,y\in K$ such that $\lambda x+(1-\lambda)y\in F$ for some $0<\lambda<1$ implies that $x,y\in F$. The face generated by $z\in K$ is denoted $F_z$, i.e.,  
\[ 
F_z:=\{y\in K\colon \lambda y +(1-\lambda)z\in K\mbox{ for some }\lambda < 0\}.
\]

We say that a cone $K$ is \textit{Archimedean} if for all $x\in V$ and $y\in K$ with $nx\le_K y$ for $n\ge 1$ we have that $x\le_K 0$. An element $u\in K$ is called an \textit{order unit} if for each $x\in V$ there exists $\lambda>0$ such that $x\le_K\lambda u$. The triple $(V,K,u)$ is called an \textit{order unit space} if $K$ is Archimedean and $u$ is an order unit.

In an order unit space the faces can be characterised as follows. 
\begin{lemma}\label{face} If $(V,K,u)$ is an order unit space and $z\in K$, then 
 $$F_z=\bigcup_{n\ge 1}[0,nz]_K.$$
\end{lemma}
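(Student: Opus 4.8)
The plan is to prove the stated set equality by two inclusions, after first recasting the defining condition of $F_z$ as an order relation against multiples of $z$. Starting from the definition, for $y\in K$ the requirement that $\lambda y+(1-\lambda)z\in K$ for some $\lambda<0$ is equivalent, upon rearranging, to $(1-\lambda)z-(-\lambda)y\in K$, i.e. to $y\leq_K c\,z$ with $c:=(1-\lambda)/(-\lambda)$. Substituting $\lambda=-t$ with $t>0$ gives $c=1+1/t$, so as $t$ ranges over $(0,\infty)$ the constant $c$ ranges over all of $(1,\infty)$. Hence the first step establishes the clean reformulation
\[
F_z=\{\,y\in K\colon y\leq_K c\,z\ \text{for some }c>1\,\}.
\]

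Next I would verify $\bigcup_{n\ge 1}[0,nz]_K\subseteq F_z$. If $y\in[0,nz]_K$, then $y\in K$ and $y\leq_K nz$ for some integer $n\ge 1$. When $n\ge 2$ this already exhibits a constant $c=n>1$; when $n=1$ I would use $z\leq_K 2z$ (since $2z-z=z\in K$) together with transitivity of $\leq_K$ to get $y\leq_K 2z$, so again $y\in F_z$.

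For the reverse inclusion $F_z\subseteq\bigcup_{n\ge 1}[0,nz]_K$, I would take $y\in F_z$, so that $y\in K$ and $y\leq_K cz$ for some $c>1$ by the reformulation. Choosing any integer $n\ge c$ (for instance $n=\lceil c\rceil$) and using $(n-c)z\in K$ gives $cz\leq_K nz$, hence $y\leq_K nz$; combined with $0\leq_K y$ this places $y\in[0,nz]_K$. Putting the two inclusions together yields the equality.

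The argument is essentially elementary and relies only on the cone axioms, namely closure under addition and nonnegative scaling, which supply transitivity of $\leq_K$ and the fact that $\alpha z\in K$ for $\alpha\ge 0$; no genuine use of the Archimedean property or of the order unit is required beyond fixing the ambient setting. The only point needing a little care — and the closest thing to an obstacle here — is the reparametrization showing that the admissible constants $c$ fill out exactly $(1,\infty)$, together with the boundary case $n=1$ (equivalently $c$ close to $1$), which is resolved by the elementary bound $z\leq_K 2z$.
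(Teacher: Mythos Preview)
Your proof is correct and follows essentially the same approach as the paper: both rewrite the defining condition $\lambda y+(1-\lambda)z\in K$ for some $\lambda<0$ as $y\leq_K cz$ for some $c>1$ via the substitution $\lambda=-t$, and then pass to integer bounds using monotonicity of order intervals. Your version simply spells out the two inclusions and the boundary case $n=1$ in more detail than the paper's terse one-line argument.
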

\begin{proof} Note that $\lambda y +(1-\lambda)z\in K$ for some $\lambda < 0$ if and only if $-\mu y+(1+\mu)z\in K$ for some $\mu >0$, which is equivalent to $y\leq_K \alpha z\in K$ for some $\alpha>0$. Thus, 
$F_z=\cup_{\alpha\geq 1} [0,\alpha z]_K =\cup_{n\geq 1} [0,nz]_K$.
 \end{proof}

An order unit space $(V,K,u)$  can be equipped  with the \textit{order unit norm} $\|\cdot\|_u$, which is  defined by
$$\|x\|_u:=\inf\{\lambda>0:-\lambda u\le_K x\le_K \lambda u\}.$$
With respect to this norm, the cone $K$ is closed by \cite[Theorem 2.55(2)]{AT}. Furthermore, the norm $\|\cdot\|_u$ is \textit{monotone}, that is, $\|x\|_u\le\|y\|_u$ for all  $0\le_K x\le_K y$. In particular, $K$ is a {\em normal} cone with respect to $\|\cdot\|_u$, i.e., there exists a constant $\kappa>0$ such that $\|x\|_u\leq \kappa\|y\|_u$ whenever $x\leq_K y$ in $V$. It is known, see \cite[Lemma 2.5]{AT}, that each interior point of $K$ is an order unit of $K$. On the other hand, if $x\in K $ is an order unit of $K$, then there exists $M>0$ such that $u\leq_K Mx$. So, for $y\in V$ with $\|y\|_u\leq 1/M$ we have that $0\leq_K x-u/M\leq_K x-y$, which show that $x\in K^\circ$. Thus, the interior $K^\circ$ coincides with the set of order units of $K$.

We see that given an order unit space $(V,K,u)$ and $x,y\in K^\circ$, there are constants $0<\beta$ such that $ x\le_K\beta y$, and hence we can define
$$M(x/y):=\inf\{\beta>0: x\le_K\beta y\}<\infty.$$
Now \textit{Thompson's metric} on $K^\circ$ is given by
$$d_T(x,y):=\log(\max\{M(x/y),M(y/x)\})$$
and was introduced  in \cite{Tho}.
The set of midpoints is denoted 
\[
\mathcal{M}(x,y):=\left\{z\in K^\circ\colon d_T(x,z)=\frac{1}{2}d_T(x,y)=d_T(z,y)\right\}.
\]

Thompson's metric spaces $(K^\circ, d_T)$ are geodesic spaces, see \cite[Section 2.6]{LNBook}, that is to say, any two points in $K^\circ$ are connected by a geodesic segment. Recall that a map $\gamma$ from an (open, closed, bounded, or, unbounded) interval $I\subseteq\mathbb{R}$ into a metric space $(X,d)$ is called a \textit{geodesic path} if $d(\gamma(s),\gamma(t))=|s-t|$ for all $s,t\in I$. The image of $\gamma$ is called a \textit{geodesic segment} in $(X,d)$, and for $x,y\in(X,d)$. 

\section{Midpoints  in general cones}
Before we give the characterisation of the midpoints set in $(K^\circ,d_T)$, where $(V,K,u)$ is an order unit space, we make some preliminary observations. 
To begin, we note that if $x,y\in K^\circ$ are linearly dependent, then the straight-line segment connecting $x$ and $y$  is a unique geodesic segment for Thompson's metric, see \cite[Lemma 3.3]{LR}, and hence the midpoints set is a singleton in that case. So, in the sequel we only need to consider the midpoints sets of  linearly independent elements of $K^\circ$. 

If $x,y\in K^\circ$ are linearly independent, then we write $V(x,y)=\Span (x,y)$ and we let  $K(x,y):=V(x,y)\cap K$ be the 2-dimensional cone containing $x$ and $y$, which has  relative interior $K(x,y)^\circ$ in $V(x,y)$. 
Note that for $w,z\in K(x,y)^\circ$ the distance $d_T(w,z)$ with respect to $K(x,y)$ is the same  as $d_T(w,z)$ with respect to $K$. As $K(x,y)$ is a closed cone in $V(x,y)$ and $K(x,y)^\circ$ is non-empty, we know \cite[Theorem A.5.1]{LNBook} that there exist  linearly independent linear functionals $\psi_1$ and $\psi_2$ on $V(x,y)$ such that 
\[
K(x,y)=\{z\in V(x,y)\colon \psi_1(z)\geq 0\mbox{ and }\psi_2(z)\geq 0\}.
\]
The linear map $\Psi\colon V(x,y)\to\mathbb{R}^2$ given by, $\Psi(z) =(\psi_1(z),\psi_2(z))$ for $z\in V(x,y)$,  maps $K(x,y)$ onto the standard positive cone $\mathbb{R}^2_+:=\{(w_1,w_2)\colon w_1\geq 0\mbox{ and }w_2\geq 0\}$. Furthermore for $u,v\in K(x,y)^\circ$ we have that 
$M(u/v)=M(\Psi(u)/\Psi(v))$, and hence $\Psi$ is a $d_T$-isometry. One can verify that in $((\mathbb{R}^2_+)^\circ, d_T)$ the path $t\mapsto \Psi(x)^{1-t}\Psi(y)^t$, for $t\in [0,1]$,  is a geodesic path from $\Psi(x)$ to $\Psi(y)$. The pull--back of  this geodesic  path under the isometry $\Psi$ is an geodesic path connecting $x$ and $y$ in $(K(x,y)^\circ, d_T)$. We will call it the {\em canonical geodesic} connecting $x$ and $y$ and  denote it by $\gamma_{xy}$. Moreover, the midpoint 
\[
m_{xy}:=\gamma_{xy}(1/2)
\] 
is said to be the {\em canonical midpoint} of $x$ and $y$. 
Note that 
\[
M(x/m_{xy}) = M(\Psi(x)/\Psi(x)^{1/2}\Psi(y)^{1/2}) = M(\Psi(x)^{1/2}/\Psi(y)^{1/2}) =M(x/y)^{1/2},
\]
so that $M(x/y)=M(x/m_{xy})^2$. Likewise it can be shown that 
\begin{equation}\label{eq:3.1}
M(m_{xy}/y)^2=M(x/y)\mbox{\quad and \quad} M(m_{xy}/x)^2=M(y/x)=M(y/m_{xy})^2.
\end{equation}

Given $x,y\in K^\circ$, we let $\ell_{xy}^+:=\{\lambda y+(1-\lambda)x\colon \lambda\geq 0\}$ be the half-line emanating from $x$ through $y$. 
The following basic observation will be useful. 
\begin{lemma}\label{lem:3.1}
Let $(V,K,u)$ be an order unit space and $x,y\in K^\circ$. If $M:=M(x/y)>1$, then $\ell_{xy}^+$ intersects $\partial K$ in 
\[
y':= \frac{M}{M-1}y+\frac{1}{1-M}x.
\]
\end{lemma}
\begin{proof}
Note that, as $K$ is closed in $(V,K,u)$, we have that $y-M^{-1}x\in\partial K$. This implies that $y':= \frac{M}{M-1}y+\frac{1}{1-M}x\in\partial K$. As $y'$ is also on $\ell_{xy}^+$ the result follows. 
\end{proof}
For a non-empty subset $W\subseteq V$ we will denote the affine span of $W$ in $V$ by $\aff W$. 
\begin{theorem}\label{thm:3.2} Let $(V,K,u)$ be an order unit space and $x,y\in K^\circ$ be linearly independent. Then the affine span of $\mathcal{M}(x,y)$ satisfies: 
\begin{enumerate}[(i)]
\item $\aff\mathcal{M}(x,y)=m_{xy}+\Span F_{y'}$, if $M(x/y)>M(y/x)$;
\item $\aff\mathcal{M}(x,y)=m_{xy}+\Span F_{x'}$, if $M(y/x)>M(x/y)$;
\item $\aff\mathcal{M}(x,y)=m_{xy}+\Span F_{x'}\cap\Span F_{y'}$, if $M(x/y)=M(y/x)$.
\end{enumerate}
\end{theorem}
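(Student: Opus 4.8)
The plan is to realise $\mathcal{M}(x,y)$ as an intersection of two Thompson metric balls, each of which is an order interval, and then to read its affine span off from the faces of $K$. Write $\rho=\tfrac12 d_T(x,y)$ and assume without loss of generality that $M:=M(x/y)\ge M(y/x)$, so $d_T(x,y)=\log M$; case (ii) will then follow by interchanging $x$ and $y$. I first record that for $z\in K^\circ$ one has $d_T(x,z)\le s$ if and only if $e^{-s}x\le_K z\le_K e^{s}x$, so the closed ball $\overline{B}(x,s)=\{z\in K^\circ\colon d_T(x,z)\le s\}$ is the order interval $[e^{-s}x,e^{s}x]_K$. The triangle inequality gives $\mathcal{M}(x,y)=\overline{B}(x,\rho)\cap\overline{B}(y,\rho)$, hence
\[
\mathcal{M}(x,y)=[M^{-1/2}x,M^{1/2}x]_K\cap[M^{-1/2}y,M^{1/2}y]_K ,
\]
an intersection of four inequalities. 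The computational backbone will be the identity $\aff[a,b]_K=a+\Span F_{b-a}$ for $a\le_K b$, which follows from $[a,b]_K=a+[0,b-a]_K$ together with $\Span[0,b-a]_K=\Span F_{b-a}$, the latter being immediate from Lemma \ref{face} since $[0,n(b-a)]_K=n[0,b-a]_K$ all have the same span.

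Next I would use \eqref{eq:3.1} to decide which of the four defining inequalities are active at $m_{xy}$. From $M(x/m_{xy})=M(m_{xy}/y)=M^{1/2}$ and $M(m_{xy}/x)=M(y/m_{xy})=M(y/x)^{1/2}$ one sees that $M^{-1/2}x\le_K z$ and $z\le_K M^{1/2}y$ are tight at $m_{xy}$, whereas $z\le_K M^{1/2}x$ and $M^{-1/2}y\le_K z$ are tight precisely when $M(y/x)=M$; this is exactly the split into the three cases. In cases (i) and (ii) the latter two inequalities hold strictly, i.e.\ $M^{1/2}x-m_{xy}\in K^\circ$ and $m_{xy}-M^{-1/2}y\in K^\circ$. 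Dropping them gives $\mathcal{M}(x,y)\subseteq[M^{-1/2}x,M^{1/2}y]_K$; conversely, for any $z$ in this order interval the point $(1-t)m_{xy}+tz$ lies in $\mathcal{M}(x,y)$ for small $t>0$, since the two slack inequalities are open conditions satisfied strictly at $m_{xy}$. Hence $\aff\mathcal{M}(x,y)=\aff[M^{-1/2}x,M^{1/2}y]_K$, and the backbone identity together with the scalar relation $M^{1/2}y-M^{-1/2}x=M^{-1/2}(M-1)\,y'$ and $m_{xy}-M^{-1/2}x\in F_{y'}$ yields $\aff\mathcal{M}(x,y)=m_{xy}+\Span F_{y'}$, as claimed.

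For case (iii) all four inequalities are active, so I would pass to the cone $C$ of feasible directions at $m_{xy}$, namely the $v$ with $m_{xy}+tv\in\mathcal{M}(x,y)$ for small $t>0$; convexity of $\mathcal{M}(x,y)$ and $m_{xy}\in\mathcal{M}(x,y)$ give $\aff\mathcal{M}(x,y)=m_{xy}+\Span C$. Put $p_1=M^{1/2}x-m_{xy}$, $p_2=m_{xy}-M^{-1/2}x$, $p_3=M^{1/2}y-m_{xy}$ and $p_4=m_{xy}-M^{-1/2}y$, all lying in $\partial K$; then $v\in C$ means $p_1-tv,\,p_2+tv,\,p_3-tv,\,p_4+tv\in K$ for small $t$. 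For the inclusion $C\subseteq\Span F_{x'}\cap\Span F_{y'}$ I would invoke the face property in the form: if $a,b\in K$ and $a+b$ lies in a face $F$, then $a,b\in F$ (apply the definition with $\lambda=\tfrac12$, using that $F_{y'}$ and $F_{x'}$ are cones by Lemma \ref{face}). Since $(p_2+tv)+(p_3-tv)=p_2+p_3$ is a positive multiple of $y'$, it lies in $F_{y'}$, forcing $p_2+tv\in F_{y'}$ and hence $v\in\Span F_{y'}$; the pair $p_1-tv,\,p_4+tv$ with sum $p_1+p_4\in\RR_{>0}\,x'$ gives $v\in\Span F_{x'}$ in the same way.

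The reverse inclusion $\Span F_{x'}\cap\Span F_{y'}\subseteq C$ is where the real work lies, and it is the step I expect to be the main obstacle. The key claim is that each $p_i$ lies in the \emph{relative interior} of the relevant face: $p_2,p_3\in\mathrm{relint}\,F_{y'}$ and $p_1,p_4\in\mathrm{relint}\,F_{x'}$. Granting this, a direction $v\in\Span F_{y'}$ keeps $p_2+tv$ and $p_3-tv$ inside $F_{y'}\subseteq K$ for small $t$, while $v\in\Span F_{x'}$ keeps $p_1-tv,\,p_4+tv$ inside $F_{x'}\subseteq K$; thus any $v$ in the intersection of the two spans is feasible, whence $C=\Span F_{x'}\cap\Span F_{y'}$ is a subspace and the theorem follows. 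To establish the relative-interior claim I would work in the two-dimensional cone $K(x,y)$ through the isometry $\Psi$ onto $\RR^2_+$: each $p_i$ lies in $V(x,y)$, on which $\Psi$ is injective, and a short coordinate computation shows that $\Psi(p_2)$, $\Psi(p_3)$ and $\Psi(y')$ all lie on the single boundary ray of $\RR^2_+$ indexed by the coordinate realising $M(x/y)$. Consequently $p_2$ and $p_3$ are \emph{positive scalar multiples} of $y'$, so they generate $F_{y'}$ and lie in its relative interior; symmetrically $p_1,p_4$ are positive multiples of $x'$. It is precisely here that the hypothesis $M(x/y)=M(y/x)$ of case (iii) is used, since it guarantees that the coordinate realising $M(x/y)$ is distinct from the one realising $M(y/x)$, so that $y'$ and $x'$ sit on different rays and the identification of $p_i$ with genuine face generators goes through.
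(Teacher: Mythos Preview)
Your proof is correct and takes a genuinely different route from the paper's.

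The paper argues geometrically: for $z\in\mathcal{M}(x,y)$ it draws the half-lines $\ell_{xz}^+$ and $\ell_{zy}^+$, finds their boundary endpoints $z_1,z_2$ via Lemma~\ref{lem:3.1}, and checks by a convex-combination computation that $z_1,z_2\in F_{y'}$, whence $z-m_{xy}\in\Span F_{y'}$. For the reverse inclusion it perturbs $m_{xy}$ by $\epsilon v$ with $v\in\Span F_{y'}$ and uses a similar-triangles argument comparing segment-length ratios to show $M(x/z)=M(x/m_{xy})$ and $M(z/y)=M(m_{xy}/y)$; the strict inequality $M(x/y)>M(y/x)$ and continuity of $M(\cdot/\cdot)$ are then invoked to control $M(z/x)$.

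Your argument is more order-theoretic: you identify $\mathcal{M}(x,y)$ with the intersection of two order intervals, classify the four defining inequalities as active or slack at $m_{xy}$ via \eqref{eq:3.1}, and in case~(i) reduce to the single interval $[M^{-1/2}x,M^{1/2}y]_K$, whose affine span is immediate from your identity $\aff[a,b]_K=a+\Span F_{b-a}$. In case~(iii) you pass to the cone of feasible directions and use the face property on the sums $p_2+p_3$ and $p_1+p_4$. Both approaches ultimately rest on the same two-dimensional computation in $K(x,y)$ via $\Psi$ to see that the $p_i$ are positive multiples of $y'$ or $x'$ (the paper states this as ``$m_{xy}'$ is a positive scalar multiple of $y'$''). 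Your approach is cleaner in that it avoids the ad~hoc similar-triangles pictures and the separate continuity argument for $M(z/x)$; the paper's approach, on the other hand, makes the projective-geometric picture behind Thompson's metric more visible.
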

\begin{proof} Note that case (ii) follows from  case (i) by symmetry.  So, suppose that $M(x/y)>M(y/x)$ and write $M:=M(x/y)$. As $d_T(x,y)=\log M$, we see that $M>1$.  It now follows from Lemma \ref{lem:3.1} that $\ell_{xy}^+$ intersects $\partial K$ in  
\[
y':=\frac{M}{M-1}y+\frac{1}{1-M}x.
\] 

Let $z\in\mathcal{M}(x,y)$. We deduce from
\[
d_T(x,y) \leq \log M(x/z)+\log M(z/y)\leq d_T(x,z)+d_T(z,y)=d_T(x,y)
\]
that $d_T(x,z)=\log M(x/z)=\log M(z/y)=d_T(z,y)$, so that $M(x/z)^2=M(z/y)^2=M$. Write $N:=M(x/z)>1$. 
Again using Lemma \ref{lem:3.1}  the half-line $\ell_{xz}^+$ intersects $\partial K$ in
\[
z_1:=\frac{N}{N-1}z+\frac{1}{1-N}x,
\]
and $\ell_{zy}^+$  intersects $\partial K$ in
\[
z_2:=\frac{N}{N-1}y+\frac{1}{1-N}z,
\]
see Figure \ref{Fig:1}.
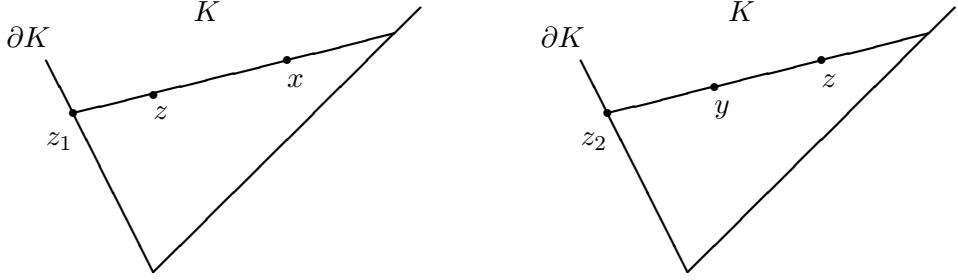
\begin{figure}[h]
\begin{center}
\thicklines
\begin{picture}(350,110)(0,0)
\put(50,0){\line(-1,2){40}}
\put(50,0){\line(1,1){100}}
\put(20,60){\line(4,1){120}}
\put(-5,85){$\partial K$}
\put(65,95){$K$}
\put(50,67){\circle*{3.0}}\put(50,58){$z$}
\put(100,80){\circle*{3.0}}\put(100,70){$x$}
\put(20,60){\circle*{3.0}}\put(10,48){$z_1$}
\put(250,0){\line(-1,2){40}}
\put(250,0){\line(1,1){100}}
\put(220,60){\line(4,1){120}}
\put(195,85){$\partial K$}
\put(265,95){$K$}
\put(260,70){\circle*{3.0}}\put(260,60){$y$}
\put(300,80){\circle*{3.0}}\put(300,70){$z$}
\put(220,60){\circle*{3.0}}\put(210,48){$z_2$}
\end{picture}
\caption{Endpoints \label{Fig:1}}
\end{center}
\end{figure}

Working out the following convex combination
\begin{align*}\frac{1}{N+1}z_1+\frac{N}{N+1}z_2&=\left(\frac{N}{N^2-1}z+\frac{1}{1-N^2}x\right)+\left(\frac{N^2}{N^2-1}y+\frac{N}{1-N^2}z\right)\\&=\frac{N^2}{N^2-1}y+\frac{1}{1-N^2}x=\frac{M}{M-1}y+\frac{1}{1-M}x\\&=y'
\end{align*}
shows that $z_1$ and $z_2$ both belong to the face $F_{y'}$.

Write $z:=m_{xy}+v$ for some $v\in V$. Let $z_3\in F_{y'}$ be the points of intersection of $\ell_{m_{xy}y}^+$ and $\partial K$. So, 
\[
z_3:=\frac{N}{N-1}y+\frac{1}{1-N}m_{xy}
\] 
by Lemma \ref{lem:3.1}. It follows that $v=(N-1)(z_3-z_2)\in \Span F_{y'}$ which yields the inclusion $$\aff\mathcal{M}(x,y)\subseteq m_{xy}+\Span F_{y'}.$$

Conversely, suppose $v\in\Span F_{y'}$, with $v\neq 0$.  Define $z:=m_{xy}+v$.  By Lemma \ref{lem:3.1} the point 
\[
m_{xy}':= \frac{N}{N-1} y+\frac{1}{1-N}m_{xy}
\]
lies in $\partial K$. As $\gamma_{xy}$ lies in $K(x,y)$, we see that $m_{xy}$ is a positive scalar multiple of $y'$, and hence $m_{xy}'$ lies in the relative interior of $F_{y'}$. 
Let $t = (1-N)^{-1}$ and note that 
\[
m_{xy}' +tv = \frac{N}{N-1}y + \frac{1}{1-N}(m_{xy} +v).
\]

As $m_{xy}'$ is in the relative interior of $F_{y'}$, we can replace $v$ by $\epsilon v$ for some $\epsilon>0$ sufficiently small, and assume that $m_{xy}'+tv\in F_{y'}$ and $m_{xy}+v\in K^\circ$. We know from \cite[pp.\,28--29]{LNBook} that 
\[
M(m_{xy}/y) = \frac{|m_{xy}m_{xy}'|}{|ym_{xy}'|}\mbox{\quad and \quad }
M(z/y) = \frac{|z(m'_{xy}+tv)|}{|y(m_{xy}' +tv)|},
\]
where $|uw'|/|ww'|$ denotes the ratio of the lengths of the straight-line segments $[u,w'$] and $[w,w']$, see Figure \ref{Fig:2}. 
\begin{figure}[h]
\begin{center}
\thicklines
\begin{picture}(160,130)(0,0)
\put(50,20){\line(-1,3){35}}
\put(0,100){\line(1,0){190}}
\put(15,50){\line(2,1){165}}
\put(23,100){\circle*{3,0}}\put(-28,107){$m_{xy}'+tv$}
\put(36,61){\circle*{3,0}}\put(12,67){$m_{xy}'$}
\put(115,100){\circle*{3,0}}\put(111,109){$y$}
\put(170,100){\line(-1,3){8}}
\put(170,100){\circle*{3,0}}\put(165,88){$z$}
\put(162,124){\circle*{3,0}}\put(150,132){$m_{xy}$}
\put(53,10){$F_{y'}$}
\end{picture}
\caption{The endpoints in $F_{y'}$\label{Fig:2}}
\end{center}
\end{figure}
Using similarity of triangles we conclude that $M(m_{xy}/y) =M(z/y)$. 

Similarly, let 
\[
\tilde{m}_{xy} = \frac{N}{N-1} m_{xy}+ \frac{1}{1-N} x.
\] 
Note that $\tilde{m}_{xy}\in \partial K$ by Lemma \ref{lem:3.1} and is a positive scalar multiple of $y'$, as $\gamma_{xy}$ is contained in $K(x,y)$. By possibly further reducing $\epsilon >0$ we may assume for $s:= N/(N-1)$ that
\[
\tilde{m}_{xy} +sv = \frac{N}{N-1}(m_{xy}+v) +\frac{1}{1-N}x\in F_{y'}
\]
and $m_{xy}+v\in K^\circ$. Again using similarity of triangles, see Figure 
\ref{Fig:3}, we get that 
\[
M(x/m_{xy}) = \frac{|x\tilde{m}_{xy}|}{|m_{xy}\tilde{m}_{xy}| }
=\frac{|x(\tilde{m}_{xy}+sv)|}{|z(\tilde{m}_{xy}+sv)|}=M(x/z).
\]
\begin{figure}[h]
\begin{center}
\thicklines
\begin{picture}(160,120)(0,0)
\put(50,20){\line(-1,3){33}}
\put(0,100){\line(1,0){190}}
\put(30,30){\line(2,1){158}}
\put(23,100){\circle*{3,0}}\put(-2,108){$\tilde{m}_{xy}$}
\put(115,100){\circle*{3,0}}\put(108,108){$m_{xy}$}
\put(170,100){\circle*{3,0}}\put(170,88){$x$}
\put(53,10){$F_{y'}$}
\put(44,37){\circle*{3,0}}\put(-6,38){$\tilde{m}_{xy}+tv$}
\put(123,77){\line(-1,3){8}}\put(123,77){\circle*{3,0}}
\put(128,66){$z$}
\end{picture}
\caption{The endpoints in $F_{y'}$\label{Fig:3}}
\end{center}
\end{figure}

It now follows from (\ref{eq:3.1}) that 
 \[
 M(x/z)^2 = M(x/m_{xy})^2 = M(x/y)  > M(y/x)=M(m_{xy}/x)^2.
 \]
 As the map $(u,w)\mapsto M(u/w)$ is continuous on $K^\circ\times K^\circ$, see \cite[Lemma 2.2]{LLNW}, we can assume, after possibly further reducing $\epsilon >0$, that $M(z/x)^2<M(x/y)$. It now follows that  $d_T(x,z) = d_T(x,m_{xy})=\frac{1}{2}d_T(x,y)$. In the same it can be shown that $d_T(z,y) = d_T(m_{xy},y)=\frac{1}{2}d_T(x,y)$. We conclude that $z\in \mathcal{M}(x,y)$ and hence $m_{xy}+ \Span F_{y'} \subseteq \aff \mathcal{M}(x,y)$.

Finally, suppose that $M(x/y)=M(y/x)$. We have already shown that the inclusions $\mathrm{aff}\mathcal{M}(x,y)\subseteq m_{xy}+\Span F_{x'}$ and $\mathrm{aff}\mathcal{M}(x,y)\subseteq m_{xy}+\Span F_{y'}$ hold, which immediately implies that 
\[
\mathrm{aff}\mathcal{M}(x,y)\subseteq m_{xy}+\Span F_{x'}\cap\Span F_{y'}.
\] 
Moreover, if $v\in\Span F_{x'}\cap\Span F_{y'}$ and $z:= m_{xy}+\epsilon v$, then  we have also shown that for small enough $\epsilon >0$, the equalities $\log M(x/z)=\log M(z/y)=\frac{1}{2}d_T(x,y)$ hold. Now since $M(x/y)=M(y/x)$, we can apply the same argument to show that 
$\log M(z/x)=\log M(y/z)=\frac{1}{2}d_T(x,y)$, and hence  
\[
m_{xy}+\Span F_{x'}\cap\Span F_{y'}\subseteq\aff\mathcal{M}(x,y),
\]
which proves the last assertion.
\end{proof}

\section{Midpoints in symmetric cones}
The interior $K^\circ$ of a closed cone $K$ in a finite-dimensional inner-product space $(V,\langle\cdot,\cdot\rangle)$ is called a \textit{symmetric cone} if the \textit{dual cone}, $K^*:=\{y\in V\colon \langle y,x\rangle\geq 0\mbox{ for all }x\in K\}$ satisfies  $K^*=K$, and the automorphism group $\mathrm{Aut}(K):=\{A\in\mathrm{GL}(V)\colon A(K)=K\}$ acts transitively on $K^\circ$. A prime example is the cone of positive definite Hermitian matrices.

It is well known that the symmetric cones in finite dimensions are precisely the interiors of  the cones of squares of Euclidean Jordan algebras.  We will follow the notation and terminology from \cite{FK}, which gives detailed account of the theory of symmetric cones.  

 A \textit{Euclidean Jordan algebra} is a finite-dimensional real inner-product space $(V,\langle\cdot,\cdot\rangle)$ equipped with a bilinear product $(x,y)\mapsto x\bullet y$ from $V\times V$ into $V$ such that for each $x,y\in V$:
\begin{enumerate}
\item[$(i)$] $x\bullet y=y\bullet x$,
\item[$(ii)$] $x\bullet (x^2\bullet y)=x^2\bullet (x\bullet y)$, and
\item[$(iii)$] for each $x\in V$, the linear map $L(x)\colon V\to V$ given by $L(x)y:=x\bullet y$
satisfies
\[
\langle L(x)y, z\rangle = \langle y, L(x)z\rangle\mbox{\quad for all }y,z\in V.
\]
\end{enumerate}
A Euclidean Jordan algebra is not  associative in general, but it is  commutative.
The {\em unit} in a Euclidean Jordan algebra  is denoted by $e$.
An element $c\in V$ is called an {\em idempotent}  if $c^2=c$.
 A set $\{c_1,\ldots,c_k\}$ is called a {\em complete system of orthogonal idempotents} if
 \begin{enumerate}
\item[$(i)$]  $c_i^2= c_i$  for all $i$,
\item[$(ii)$] $c_i\bullet c_j =0$ for all $i\neq j$, and
\item[$(iii)$] $c_1+\cdots+ c_k =e$.
\end{enumerate}

The spectral theorem \cite[Theorem III.1.1]{FK} says that for each $x\in V$ there exist unique real numbers $\lambda_1, \ldots,\lambda_k$, all distinct, and a complete system of orthogonal idempotents $c_1,\ldots,c_k$ such that
$x = \lambda_1 c_1+\cdots +\lambda_k c_k$.
The numbers $\lambda_i$ are called the {\em eigenvalues} of $x$. The {\em spectrum} \index{spectrum} of $x$ is denoted  by
$\sigma(x) =\{\lambda \colon \lambda \mbox{ eigenvalue of } x\}$,
and we write
\[\lambda_+(x)=\max\{\lambda\colon\lambda\in\sigma(x)\} \mbox{\quad and\quad }
\lambda_-(x)=\min\{\lambda\colon\lambda\in\sigma(x)\}.
\]

For $x\in V$   the linear mapping, $P(x) = 2L(x)^2 -L(x^2)$,
is called the {\em quadratic representation} of $x$. Note that $P(x^{-1/2})x=e$ for all $x\in K^\circ$. It is known that $P(x^{-1}) = P(x)^{-1}$ for all $x\in K^\circ$ and  $P(x)\in\mathrm{Aut}(K)$ whenever $x\in K^\circ$, see \cite[Proposition III.2.2]{FK}.
So, $P(x)$ is an isometry of $(K^\circ,d_T)$ if $x\in K^\circ$ by  \cite[Corollary 2.1.4]{LNBook}.
For $x,y\in K^\circ$ we write
\[
\lambda_+(x,y) =\lambda_+(P(y^{-1/2}) x)\mbox{\quad and\quad }
\lambda_-(x,y) =\lambda_-(P(y^{-1/2}) x).
\]
Note that for $x,y\in K^\circ$,  $x\leq_K \beta y$ if and only if $0\leq_K \beta e -P(y^{-1/2})x$, and hence
\[
M(x/y) = \lambda_+(x,y).\]
Similarly, $\alpha y\leq_K x$ is equivalent with $0\leq_K P(y^{-1/2})x-\alpha e$, and hence
\[
M(y/x)^{-1} = \lambda_-(x,y).
\]
So, for $x,y\in K^\circ$ the Thompson metric distance is given by
\[
d_T(x,y) = \log\left( \max\{\lambda_+(x,y), \lambda_-(x,y)^{-1}\} \right).
\]

For $A,B\in \Pi_n(\mathbb{C})^\circ$ we have that $P(B^{-\frac{1}{2}})A=B^{-\frac{1}{2}}AB^{-\frac{1}{2}}$; so, in that  case 
\[
d_T(A,B)=\max\left\{\max_i\log\lambda_i(B^{-\frac{1}{2}}AB^{-\frac{1}{2}}), \max_i-\log\lambda_i(B^{-\frac{1}{2}}AB^{-\frac{1}{2}})\right\} =\max_i\left|\log\lambda_i(B^{-1}A)\right|.
\]

The quadratic representation $P(y^{-1/2})$ of $y\in K^\circ$ is an isometry with respect to Thompson's metric, and hence $z\in\mathcal{M}(x,y)$ if and only if $P(y^{-\frac{1}{2}})z\in\mathcal{M}(P(y^{-\frac{1}{2}})x,e)$. Thus, without loss of generality, we may consider midpoints sets of the form $\mathcal{M}(x,e)$ where $x\in K^\circ$. 

The following lemma, which  is Exercise III.3 in \cite{FK} will be useful in the sequel. A proof can be found in \cite[Lemma 6.1]{LR}. 
\begin{lemma}\label{j-prod in-prod} Let $K^\circ$ be a symmetric cone. For $x,y\in K$ we have $\langle x,y\rangle=0$ if and only if $x\bullet y=0$.
\end{lemma}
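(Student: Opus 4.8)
The plan is to prove both implications, the forward one being immediate and the converse carrying all the content. For the forward direction, if $x\bullet y=0$ then the self-adjointness of $L(x)$ from property $(iii)$ gives $\langle x,y\rangle=\langle x\bullet e,y\rangle=\langle e,x\bullet y\rangle=0$, since $x\bullet e=x$. It therefore remains to treat the converse, and for this I would exploit the spectral and Peirce decompositions.

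For the converse I would first reduce matters to the case in which one of the two elements is an idempotent. Applying the spectral theorem to $x\in K$, I write $x=\sum_i\lambda_ic_i$ with $\lambda_i\geq 0$ and $\{c_i\}$ a complete system of orthogonal idempotents, so that $\langle x,y\rangle=\sum_i\lambda_i\langle c_i,y\rangle$ and $x\bullet y=\sum_i\lambda_i(c_i\bullet y)$. The crux is then the following claim: for an idempotent $c$ and $y\in K$ one has $\langle c,y\rangle\geq 0$, with equality precisely when $c\bullet y=0$.

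To establish the claim I would pass to the square root $w:=y^{1/2}\in K$, which exists by the spectral theorem, and decompose it along the Peirce spaces of $c$, writing $w=w_1+w_{1/2}+w_0$ with $w_\alpha\in V_\alpha(c):=\{v\colon c\bullet v=\alpha v\}$. Since $L(c)$ is self-adjoint by property $(iii)$, these eigenspaces are mutually orthogonal, and combining the identity $\langle c,w\bullet w\rangle=\langle c\bullet w,w\rangle$ with $c\bullet w=w_1+\tfrac12 w_{1/2}$ yields
\[
\langle c,y\rangle=\langle c\bullet w,w\rangle=\|w_1\|^2+\tfrac12\|w_{1/2}\|^2.
\]
This expression is manifestly nonnegative, and it vanishes exactly when $w_1=w_{1/2}=0$, that is, when $w\in V_0(c)$. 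Because $V_0(c)$ is a subalgebra, $w\in V_0(c)$ forces $y=w^2\in V_0(c)$ and hence $c\bullet y=0$; conversely $c\bullet y=0$ gives $\langle c,y\rangle=\langle e,c\bullet y\rangle=0$ by the forward direction. This proves the claim.

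Finally I would assemble the pieces. Each summand $\lambda_i\langle c_i,y\rangle$ is nonnegative by the claim, and since they add up to $\langle x,y\rangle=0$, each must vanish; thus $\langle c_i,y\rangle=0$, and hence $c_i\bullet y=0$, for every $i$ with $\lambda_i>0$. Consequently $x\bullet y=\sum_i\lambda_i(c_i\bullet y)=0$, which completes the converse. I expect the main obstacle to be the single-idempotent claim: the decisive idea is the substitution $w=y^{1/2}$, which simultaneously exhibits $\langle c,y\rangle$ as a sum of squared Peirce-norms—delivering both its nonnegativity and the rigidity $\langle c,y\rangle=0\Rightarrow c\bullet y=0$—while the passage from $c\bullet w=0$ back to $c\bullet y=0$ rests on the Peirce multiplication rule $V_0(c)\bullet V_0(c)\subseteq V_0(c)$.
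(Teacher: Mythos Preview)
Your argument is correct. The forward implication is the easy associative-form identity $\langle x,y\rangle=\langle e,x\bullet y\rangle$, and for the converse your reduction to a single idempotent via the spectral decomposition of $x$, together with the Peirce computation $\langle c,y\rangle=\|w_1\|^2+\tfrac12\|w_{1/2}\|^2$ for $w=y^{1/2}$, is clean and complete; the only structural facts you invoke---orthogonality of the Peirce spaces (self-adjointness of $L(c)$) and that $V(c,0)$ is a subalgebra---are standard and indeed cited elsewhere in the paper via \cite[Proposition~IV.1.1]{FK}.

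As for comparison: the paper does not give an in-house proof at all but defers to \cite[Lemma~6.1]{LR}, noting only that the statement is Exercise~III.3 in \cite{FK}. Your write-up therefore supplies what the paper omits. The argument in \cite{LR} proceeds along essentially the same lines (spectral reduction to an idempotent and Peirce orthogonality), so there is no genuine methodological difference to report.
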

Given an idempotent $c\in K$ we have the Peirce decomposition $$V=V(c,0)\oplus V(c,{\textstyle \frac{1}{2}})\oplus V(c,1)$$ where $V(c,\lambda)$ are the corresponding eigenspaces of the only possible eigenvalues $\lambda$ that the linear operator $L(c)$ can have, see \cite[Proposition III.1.3]{FK}. Although this is a direct sum of vector spaces, both components $V(c,0)$ and $V(c,1)$ are Jordan subalgebras \cite[Proposition IV.1.1]{FK}, and for $\lambda=0,1$ we will denote the cone of squares in $V(c,\lambda)$ by $K(c,\lambda)$. Regarding the midpoint sets, we are particularly interested in $V(c,0)$. Note that this subalgebra has $e-c$ as a unit. 

For $x\in K^\circ$ with spectral decomposition $x=\sum_{i=1}^k\lambda_i c_i$ we let 
\[
\mathcal{C}_x:=\left\{c_i\in\{c_1\ldots,c_k\}:\ \max\{\lambda_i,\lambda_i^{-1}\}=\max\{\lambda_+(x),\lambda_-(x)^{-1}\}\right\}.
\]
Note that after reordering the eigenvalues $\lambda_1<\lambda_2<\cdots<\lambda_k$ we have that  $\mathcal{C}_x\subseteq\{c_1,c_n\}$. Before we characterise the affine span of the midpoints set $\mathcal{M}(x,e)$ for $x\in K^\circ$, we first prove the following lemma.
\begin{lemma}\label{face2}Let $V$ be a Euclidean Jordan algebra with cone of squares $K$ and let $c\in K$ be an idempotent. Then $K(c,0)$ is a face of $K$ with relative interior $$K(c,0)^\circ=\mathrm{Inv}(V(c,0))\cap K(c,0),$$ where $\mathrm{Inv}(V(c,0))$ denotes the set of invertible elements in the subalgebra $V(c,0)$.
\end{lemma}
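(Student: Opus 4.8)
The plan is to identify $K(c,0)$ with the face of $K$ generated by the idempotent $e-c$, and then to read off the relative interior from the spectral theorem applied inside the subalgebra $V(c,0)$. As a preliminary I would record some facts about $V(c,0)$. By \cite[Proposition IV.1.1]{FK} it is a Euclidean Jordan algebra with unit $e-c$, and $K(c,0)$ is its cone of squares; since a square computed in the subalgebra $V(c,0)$ is the same element as the corresponding square in $V$, we have $K(c,0)\subseteq K$. Moreover, if $a\in V(c,0)$ has spectral decomposition $a=\sum_j\mu_j d_j$ taken inside $V(c,0)$, then each $d_j$ is an idempotent of $V(c,0)$ satisfying $c\bullet d_j=0$, so $\{d_j\}\cup\{c\}$ is a complete system of orthogonal idempotents in $V$; hence the eigenvalues of $a$ in $V$ are exactly $\{\mu_j\}$ together with $0$. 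In particular $a\in V(c,0)\cap K$ forces every $\mu_j\ge 0$, so $a$ is a square in $V(c,0)$ and therefore $V(c,0)\cap K=K(c,0)$.

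Next I would prove $K(c,0)=F_{e-c}$. For $K(c,0)\subseteq F_{e-c}$, take $a=\sum_j\mu_j d_j\in K(c,0)$ with $\mu_j\ge 0$; then $\alpha(e-c)-a=\sum_j(\alpha-\mu_j)d_j\in K(c,0)\subseteq K$ as soon as $\alpha\ge\max_j\mu_j$, so $0\le_K a\le_K\alpha(e-c)$ and Lemma \ref{face} gives $a\in F_{e-c}$. For the reverse inclusion let $y\in F_{e-c}$, so by Lemma \ref{face} there is $\alpha>0$ with $0\le_K y\le_K\alpha(e-c)$. Since $c\bullet(e-c)=c-c^2=0$, Lemma \ref{j-prod in-prod} yields $\langle e-c,c\rangle=0$; combining $\alpha(e-c)-y\in K$ and $c\in K$ with the self-duality $K^*=K$ gives $\langle y,c\rangle=\alpha\langle e-c,c\rangle-\langle\alpha(e-c)-y,c\rangle\le 0$, while $y,c\in K$ forces $\langle y,c\rangle\ge 0$. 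Hence $\langle y,c\rangle=0$, so $y\bullet c=0$ by Lemma \ref{j-prod in-prod}, i.e. $y\in V(c,0)$. Thus $y\in V(c,0)\cap K=K(c,0)$, proving $K(c,0)=F_{e-c}$. As $F_{e-c}$ is by construction a face of $K$, so is $K(c,0)$.

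For the relative interior I would argue entirely within $V(c,0)$: by the spectral theorem \cite[Theorem III.1.1]{FK} an element $a=\sum_j\mu_j d_j\in K(c,0)$ lies in the relative interior of $K(c,0)$ in $V(c,0)$ precisely when all $\mu_j>0$ (a zero eigenvalue puts $a$ on the boundary, while strict positivity of the whole spectrum is preserved under small perturbations), and $\mu_j>0$ for all $j$ is exactly the condition that $a$ be invertible in $V(c,0)$, with inverse $\sum_j\mu_j^{-1}d_j$. This gives $K(c,0)^\circ=\mathrm{Inv}(V(c,0))\cap K(c,0)$.

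The main obstacle is the inclusion $F_{e-c}\subseteq K(c,0)$, namely showing that an element squeezed between $0$ and a multiple of $e-c$ has vanishing Peirce components in $V(c,\tfrac12)$ and $V(c,1)$. The device that makes this work is the pairing computation above: self-duality of $K$ together with the equivalence $\langle x,y\rangle=0\iff x\bullet y=0$ from Lemma \ref{j-prod in-prod} converts the order relation $y\le_K\alpha(e-c)$ into the orthogonality $y\bullet c=0$, which is precisely membership in $V(c,0)$. Everything else is either bookkeeping with the spectral decomposition or a direct citation.
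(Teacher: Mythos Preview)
Your argument is correct, but it is organised differently from the paper's proof. The paper verifies the face property directly from the convex-combination definition: if $z\in K(c,0)$ and $z=t\xi_1+(1-t)\xi_2$ with $\xi_1,\xi_2\in K$, then $0\le t\langle c,\xi_1\rangle+(1-t)\langle c,\xi_2\rangle=\langle c,z\rangle=0$, so each $\langle c,\xi_i\rangle=0$ and Lemma~\ref{j-prod in-prod} gives $\xi_i\in K(c,0)$. You instead prove the identification $K(c,0)=F_{e-c}$ and infer face-ness from that; this is a detour for the lemma as stated, but it yields precisely the identity the paper establishes separately in the paragraph immediately following Lemma~\ref{face2}, so your route effectively merges the two results. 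Both arguments rest on the same device: self-duality plus Lemma~\ref{j-prod in-prod} to pass from $\langle y,c\rangle=0$ to $y\bullet c=0$. For the relative interior, the paper simply cites \cite[Theorem~III.2.1]{FK} applied to the subalgebra $V(c,0)$, whereas you reprove that statement via the spectral decomposition; your sketch (``strict positivity of the spectrum is preserved under small perturbations'') is correct but relies implicitly on continuity of eigenvalues, which is exactly what the cited theorem packages.
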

\begin{proof}Let $z\in K(c,0)$. If $\xi_1,\xi_2\in K$ and $0<t<1$ are such that $z=t\xi_1+(1-t)\xi_2$, then 
\[
0\leq t\inprod{c,\xi_1}+(1-t)\inprod{c,\xi_2}=\inprod{c,z}=0,
\] 
so $\xi_1,\xi_2\in K(c,0)$ by Lemma \ref{j-prod in-prod}, and hence $K(c,0)$ is a face of $K$. Note that the Jordan subalgebra $V(c,0)$ has unit $e-c\in K(c,0)$, since $(e-c)^2=e-c$. The fact that  $K(c,0)^\circ=\mathrm{Inv}(V(c,0))\cap K(c,0)$ now follows from \cite[Theorem III.2.1]{FK}.
\end{proof}

Note that $K(c,0)=F_{e-c}$. Indeed, if $x\in K(c,0)$, then $x\le n(e-c)$ for some $n\ge 1$, as $e-c$ is an order unit in $V(c,0)$. So, $K(c,0)\subseteq F_{e-c}$ 
by Lemma \ref{face}. Conversely, if $y\in F_{e-c}$, then $0\le_K y\le_K n(e-c)$ for some $n\ge 1$. It now follows that 
\[
0\leq\inprod{c,y}=\inprod{c,y}-\inprod{c,n(e-c)}=\inprod{c,y-n(e-c)}\leq 0,
\] 
and hence $y\in K(c,0)$ by Lemma \ref{j-prod in-prod}. 

We can prove the characterisation of the midpoints set in symmetric cones. 
\begin{theorem} \label{thm:4.3} Let $K^\circ$ be a symmetric cone. For $x\in K^\circ\setminus\{e\}$ let $\mathcal{C}_x$ be defined as above and put $c:=\sum_{c_i\in\mathcal{C}_x}c_i$. The affine span of $\mathcal{M}(x,e)$ satisfies 
$$\aff\mathcal{M}(x,e)=m_{xe}+V(c,0).$$
\end{theorem}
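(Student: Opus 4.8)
The plan is to specialise Theorem \ref{thm:3.2} to the case $y=e$ and then translate the spans of the faces $F_{y'}$ and $F_{x'}$ appearing there into Peirce zero-components. Since $e$ is the unit, $P(e^{-1/2})=P(e)$ is the identity, so $M(x/e)=\lambda_+(x)$ and $M(e/x)=\lambda_-(x)^{-1}$. Writing $\lambda_+:=\lambda_+(x)$ and $\lambda_-:=\lambda_-(x)$, the three cases of Theorem \ref{thm:3.2} are governed by the sign of $\lambda_+-\lambda_-^{-1}$, and because $x\neq e$ we have $\max\{\lambda_+,\lambda_-^{-1}\}>1$.

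First I would compute the relevant boundary points explicitly. Using the spectral decomposition $x=\sum_{i=1}^k\lambda_i c_i$ together with $e=\sum_{i=1}^k c_i$, Lemma \ref{lem:3.1} applied with $M=M(x/e)=\lambda_+$ (valid when $\lambda_+>1$) gives
\[
y'=\frac{\lambda_+}{\lambda_+-1}e+\frac{1}{1-\lambda_+}x=\sum_{i\colon \lambda_i<\lambda_+}\frac{\lambda_+-\lambda_i}{\lambda_+-1}\,c_i,
\]
and symmetrically, when $\lambda_-^{-1}>1$,
\[
x'=\frac{\lambda_-^{-1}}{\lambda_-^{-1}-1}x+\frac{1}{1-\lambda_-^{-1}}e=\sum_{i\colon \lambda_i>\lambda_-}\frac{\lambda_i-\lambda_-}{1-\lambda_-}\,c_i.
\]
Setting $c_+:=\sum_{\lambda_i=\lambda_+}c_i$ and $c_-:=\sum_{\lambda_i=\lambda_-}c_i$, every coefficient occurring in $y'$ is strictly positive, so $y'$ is an invertible element of the subalgebra $V(c_+,0)$ lying in $K(c_+,0)$; by Lemma \ref{face2} this places $y'$ in the relative interior of the face $K(c_+,0)=F_{e-c_+}$, whence $F_{y'}=K(c_+,0)$ and $\Span F_{y'}=V(c_+,0)$. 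The same argument gives $\Span F_{x'}=V(c_-,0)$.

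It then remains to match $\mathcal{C}_x$ with $c_\pm$ in each case. If $\lambda_+>\lambda_-^{-1}$, then $\lambda_+^{-1}<\lambda_-$, so no eigenvalue equals $\lambda_+^{-1}$ and the condition $\max\{\lambda_i,\lambda_i^{-1}\}=\lambda_+$ holds exactly when $\lambda_i=\lambda_+$; thus $\mathcal{C}_x=\{c_i\colon\lambda_i=\lambda_+\}$ and $c=c_+$, so Theorem \ref{thm:3.2}(i) delivers $\aff\mathcal{M}(x,e)=m_{xe}+V(c,0)$. The case $\lambda_-^{-1}>\lambda_+$ is symmetric, with $c=c_-$ and Theorem \ref{thm:3.2}(ii). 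In the borderline case $\lambda_+=\lambda_-^{-1}$ both reciprocal eigenvalues attain the maximum, so $c=c_++c_-$, and Theorem \ref{thm:3.2}(iii) gives $\aff\mathcal{M}(x,e)=m_{xe}+V(c_+,0)\cap V(c_-,0)$; it therefore suffices to establish the identity $V(c_+,0)\cap V(c_-,0)=V(c_++c_-,0)$.

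For that identity the key observation is that $L(c_+)$ and $L(c_-)$ are self-adjoint with spectrum in $\{0,\tfrac12,1\}$, hence positive semidefinite. Since $V(c,0)=\ker L(c)$ and $L(c_++c_-)=L(c_+)+L(c_-)$, any $z\in\ker\bigl(L(c_+)+L(c_-)\bigr)$ satisfies $\langle L(c_+)z,z\rangle+\langle L(c_-)z,z\rangle=0$ with both summands nonnegative, forcing $L(c_+)z=L(c_-)z=0$; the reverse inclusion is immediate, so $\ker\bigl(L(c_+)+L(c_-)\bigr)=\ker L(c_+)\cap\ker L(c_-)$, which is precisely $V(c_++c_-,0)=V(c_+,0)\cap V(c_-,0)$. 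The step I expect to be most delicate is the bookkeeping that identifies $F_{y'}$ with $K(c_+,0)$ exactly: one must verify that all coefficients in $y'$ are strictly positive so that $y'$ is genuinely a relative interior point of the face (and not of a smaller subface), which is what pins down $\Span F_{y'}=V(c_+,0)$; this, together with the positive-semidefiniteness argument collapsing the intersection of zero-components in the borderline case, forms the technical heart of the proof.
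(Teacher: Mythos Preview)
Your proposal is correct and follows essentially the same route as the paper: specialise Theorem~\ref{thm:3.2} to $y=e$, compute the boundary points explicitly from the spectral decomposition, recognise them as relative interior points of the Peirce faces $K(c_\pm,0)$ via Lemma~\ref{face2}, and read off the spans. The only minor variation is in the borderline case $\lambda_+=\lambda_-^{-1}$: the paper shows $V(c,0)=V(c_1,0)\cap V(c_k,0)$ by noting $K(c,0)\subseteq\ker L(c_1)\cap\ker L(c_k)$ and that $K(c,0)$ spans $V(c,0)$, whereas you argue directly from the positive semidefiniteness of $L(c_\pm)$ that $\ker\bigl(L(c_+)+L(c_-)\bigr)=\ker L(c_+)\cap\ker L(c_-)$; both arguments are short and valid.
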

\begin{proof}
Let $x=\lambda_1 c_1+\cdots+\lambda_k c_k$ be the spectral decomposition of $x$ with $\lambda_1<\cdots<\lambda_k$. First suppose that $\mathcal{C}_x=\{c_k\}$. Note that $\lambda_k>1$, as $d_T(x,e) =\log \lambda_k>0$.  The endpoint 
$$x'=\frac{\lambda_k}{\lambda_k-1}e-\frac{1}{\lambda_k-1}x=\sum_{i=1}^{k-1}\frac{\lambda_k-\lambda_i}{\lambda_k-1}c_i,$$ where $e$ is between $x'$ and $x$, is in the relative interior of $K(c,0)$  by Lemma \ref{face2}, as it is invertible in $V(c,0)$ with respect to $e-c_k=c_1+\cdots+c_{k-1}$. The desired equality now follows from Theorem \ref{thm:3.2} since $K(c,0)$ is generating in $V(c,0)$. In the same way it can be shown that the assertion  holds if  $\mathcal{C}_x=\{c_1\}$. Finally, if $\mathcal{C}_x=\{c_1,c_k\}$, then $c=c_1+c_k$ and it follows from Theorem \ref{thm:3.2} that $$\aff \mathcal{M}(x,e)=m_{xe}+V(c_1,0)\cap V(c_k,0).$$ 
Clearly $V(c,0)\supseteq V(c_1,0)\cap V(c_k,0)$.
As $V(c_i,0)=\ker L(c_i)$ for $i=1,k$ and $K(c,0)\subseteq\ker L(c_1)\cap\ker L(c_k)$, we must have that $V(c,0)=V(c_1,0)\cap V(c_k,0)$, since $\Span K(c,0)= V(c,0)$.
\end{proof}
It follows from Theorem \ref{thm:4.3} that $\dim\mathrm{aff}(\mathcal{M}(x,e))=\dim V(c,0)$. If $V$ is a simple Euclidean Jordan algebra, i.e., $V$ has no non-trivial ideals, then for any two orthogonal primitive idempotents $c_1$ and $c_2$ in $V$ the dimension $$d:=\dim V(c_1,{\textstyle\frac{1}{2}})\cap V(c_2,{\textstyle\frac{1}{2}})$$ is independent of $c_1$ and $c_2$. In fact,  if $\mathrm{rank}(V)=r$, then $$n=r+{\textstyle\frac{d}{2}}r(r-1)$$ by \cite[Corollary IV.2.6]{FK}. 

Now let us decompose  $x$ with respect to a Jordan frame, for details see \cite[Theorem III.1.2]{FK}, where we might have different primitive idempotents corresponding to the same eigenvalues, so 
$$x=\lambda_1 c_1+\cdots+\lambda_r c_r.$$ 
We can  rearrange the eigenvalues in such a way that \begin{align}\label{frame rep}x=\sum_{j\in\mathcal{C}_x^c}\lambda_jc_j+\sum_{c_i\in\mathcal{C}_x}\lambda_i c_i.\end{align}  
It follows that for $k:=|\mathcal{C}_x^c|$  the dimensional formula $$\dim V(c,0)=\dim V(e-c,1)=k+{\textstyle\frac{d}{2}}k(k-1)$$ 
holds by \cite[Proposition IV.3.1]{FK}. This immediately gives the following corollary.
\begin{corollary}\label{affine dimension}Let $V$ be a simple $n$-dimensional  Euclidean Jordan algebra with cone of squares $K$ and rank $r>1$. If $x\in K^\circ$, then the affine dimension of the midpoint set satisfies $$\dim\aff(\mathcal{M}(x,e))=k+\frac{n-r}{r(r-1)}k(k-1).$$
\end{corollary}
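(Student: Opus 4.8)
The plan is to read the dimension straight off Theorem~\ref{thm:4.3} and then reduce everything else to the two structural identities for simple Euclidean Jordan algebras recorded immediately above the corollary: the Peirce dimension formula $\dim V(e-c,1)=k+\frac{d}{2}k(k-1)$ from \cite[Proposition IV.3.1]{FK}, and the rank relation $n=r+\frac{d}{2}r(r-1)$ from \cite[Corollary IV.2.6]{FK}. Since Theorem~\ref{thm:4.3} already yields $\dim\aff(\mathcal{M}(x,e))=\dim V(c,0)$ for $c=\sum_{c_i\in\mathcal{C}_x}c_i$, the corollary is in the end an arithmetic substitution; the real content is making sure the integer $k$ appearing in the two formulas is one and the same.

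First I would fix a Jordan frame realising $x$ as in \eqref{frame rep}, so that $c$ is precisely the sum of those primitive idempotents whose eigenvalue attains $\max\{\lambda_+(x),\lambda_-(x)^{-1}\}$, while its complement $e-c=\sum_{j\in\mathcal{C}_x^c}c_j$ is a sum of exactly $k:=|\mathcal{C}_x^c|$ primitive idempotents. Next I would invoke the Peirce identity $V(c,0)=V(e-c,1)$, which is immediate from $L(e-c)=L(e)-L(c)=\mathrm{id}-L(c)$, so that an eigenvalue $\lambda$ of $L(c)$ corresponds to the eigenvalue $1-\lambda$ of $L(e-c)$. This lets me rewrite the dimension from Theorem~\ref{thm:4.3} as $\dim V(e-c,1)$, and since $e-c$ has rank $k$ in the frame, \cite[Proposition IV.3.1]{FK} gives $\dim\aff(\mathcal{M}(x,e))=k+\frac{d}{2}k(k-1)$, where $d=\dim V(c_1,\frac{1}{2})\cap V(c_2,\frac{1}{2})$ is the frame-independent constant.

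The last step is to eliminate $d$. Because $V$ is simple of rank $r$, \cite[Corollary IV.2.6]{FK} gives $n=r+\frac{d}{2}r(r-1)$, and as $r>1$ this solves to $\frac{d}{2}=\frac{n-r}{r(r-1)}$. Substituting this into the previous expression produces $\dim\aff(\mathcal{M}(x,e))=k+\frac{n-r}{r(r-1)}k(k-1)$, which is exactly the claimed identity.

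I expect the only genuine subtlety to be bookkeeping rather than analysis: one must confirm that the idempotent $c$ coming from the coarse, distinct-eigenvalue spectral decomposition used in Theorem~\ref{thm:4.3} agrees with the sum of the primitive idempotents singled out by $\mathcal{C}_x$ at the frame level, so that the $k$ counting primitive idempotents \emph{off} the extreme eigenvalue is the same $k$ that enters both dimension formulas. The hypothesis $r>1$ is essential at precisely this point, since it is what makes $d$ a well-defined constant and what legitimises dividing by $r(r-1)$; for $r=1$ the cone is a half-line, the midpoint is unique, and the formula degenerates.
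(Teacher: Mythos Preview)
Your proposal is correct and follows essentially the same route as the paper: invoke Theorem~\ref{thm:4.3} to reduce to $\dim V(c,0)=\dim V(e-c,1)$, apply \cite[Proposition IV.3.1]{FK} with $k=|\mathcal{C}_x^c|$ counted at the Jordan-frame level, and then eliminate $d$ via $n=r+\frac{d}{2}r(r-1)$. Your explicit remarks on the identification $V(c,0)=V(e-c,1)$ and on matching the coarse-spectral idempotent with the frame-level one are a bit more detailed than the paper's discussion, but the argument is the same.
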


As an example, let us  consider the Hermitian matrices $\mathbb{H}_n(\mathbb{C})$ and compute $\dim(\aff\mathcal{M}(A,I_n))$ for $A\in\Pi_n(\CC)^\circ$ and $I_n$ the $n\times n$ identity matrix. There exists an unitary matrix $U$ such that $UAU^{*}=D$ where $D$ is a diagonal matrix  with the eigenvalues on the diagonal arranged as in (\ref{frame rep}). Conjugating with $U$ is a linear automorphism of $\Pi_n(\CC)$, and $B\in\mathcal{M}(A,I_n)$ if and only if $U^{*}BU\in\mathcal{M}(D,I_n)$. So, to compute the dimension of $\aff\mathcal{M}(A,I_n)$ we may assume without loss of generality that $A$ is a diagonal matrix as described above. In that case the projection $C$  is of the form $$C=\sum_{i=k+1}^nE_{ii}=\begin{pmatrix}0&0\\0&I_{n-k}\end{pmatrix}$$ where $k=|\mathcal{C}_A^c|$. It is easily checked that $V(C,0)$ equals $$V(C,0)=\left\{\begin{pmatrix}A_k&0\\0&0\end{pmatrix}:\ A_k\in\mathbb{H}_k(\CC)\right\},$$ see \cite[p.\,63]{FK}, and  $\dim(V(C,0))=k^2$. Since $\mathrm{rank}(V)=n$ and $\dim(V)=n^2$, it follows from Corollary \ref{affine dimension} that $$\dim(\aff\mathcal{M}(A,I_n))=k+k(k-1)=k^2.$$

\footnotesize

 \vspace{15mm} 
\hfill \begin{tabular}{l l }
  Bas Lemmens & Mark Roelands\\
 School of Mathematics, & School of Mathematics\\
 Statistics \& Actuarial Sciences  & Statistics \& Actuarial Sciences\\
 University of Kent & University of Kent\\
CT27NF, Canterbury & CT27NF, Canterbury \\
 United Kingdom & United Kingdom \\
 B. Lemmens@kent.ac.uk & mark.roelands@gmail.com\\

\end{tabular}

\end{document}